\theoremstyle{definition}
\newtheorem*{theoA}{Theorem A}
\newtheorem*{theoB}{Theorem B}
\newtheorem*{theoC}{Theorem C}
\newtheorem*{theoD}{Theorem D}
\newtheorem*{theoE}{Theorem E}
\newtheorem{theorem}{Theorem}[section]
\newtheorem{lem}{Lemma}[section]
\newtheorem{defi}{Definition}[section]
\newtheorem{rem}{Remark}[section]
\newtheorem{exm}{Example}[section]
\begin{document}
\title[Value distribution of a differential monomial]{A Note on the value distribution of a differential monomial and some normality criteria}
\date{}
\author[S. Saha and B. Chakraborty]{Sudip Saha$^{1}$ and Bikash Chakraborty$^{2}$}
\date{}
\address{$^{1}$Department of Mathematics, Ramakrishna Mission Vivekananda Centenary College, Rahara,
West Bengal 700 118, India.}
\email{sudipsaha814@gmail.com}
\address{$^{2}$Department of Mathematics, Ramakrishna Mission Vivekananda Centenary College, Rahara,
West Bengal 700 118, India.}
\email{bikashchakraborty.math@yahoo.com, bikash@rkmvccrahara.org}
\maketitle
\let\thefootnote\relax
\footnotetext{2010 Mathematics Subject Classification: 30D45, 30D30, 30D20, 30D35.}
\footnotetext{Key words and phrases: Value distribution theory, Normal family, Meromorphic functions, Differential monomials.}
\begin{abstract}
In this paper, we prove some value distribution results which lead to some normality criteria for a family of analytic functions. These results improve some recent results.
\end{abstract}
\section{Introduction and Main Results}
Throughout this paper, we assume that the reader is familiar with the theory of normal families (\cite{Sc, z}) of meromorphic functions on a domain $D\subseteq \mathbb{C}\cup\{\infty\}$ and the value distribution theory (\cite{Hy}). Further, it will be convenient to let that $E$ denote any set of positive real numbers of finite Lebesgue measure, not necessarily same at each occurrence. For any non-constant meromorphic function $f$, we denote by $S(r,f)$ any quantity satisfying $$S(r, f) = o(T(r, f))~~\text{as}~~r\to\infty,~r\not\in E.$$
~~~~Let $f$ be a non-constant meromorphic function. A meromorphic function $a(z)(\not\equiv 0,\infty)$ is called a \enquote{small function} with respect to $f$ if $T(r,a(z))=S(r,f)$. For example, polynomial functions are small functions with respect to any transcendental entire function.\par
A family $\mathscr{G}$ of meromorphic functions in a domain $D \subset \mathbb{C} \cup \{\infty\}$ is said to be normal in $D$ if every sequence $\{g_n\} \subset \mathscr{G}$ contains a subsequence which converges spherically,  uniformly on every compact subsets of $D$.
\medbreak
In 1959, Hayman proved the following theorem:
\begin{theoA}(\cite{hn})
If $f$ is a transcendental meromorphic function and $n\geq 3$, then $f^{n}f'$ assumes all finite values except possibly zero infinitely often.
\end{theoA}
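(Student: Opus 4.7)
My plan is to argue by contradiction. Suppose $\varphi := f^{n} f'$ assumes the value $c \neq 0$ only finitely often, so that $\overline{N}(r, 1/(\varphi - c)) = O(\log r) = S(r, f)$. The target is an inequality of the shape $(n-2)\, T(r, f) \leq S(r, f)$, which forces $T(r, f) = S(r, f)$ and contradicts the transcendence of $f$.

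First I would apply Nevanlinna's Second Main Theorem to $\varphi$ at the three values $0, c, \infty$:
\[ T(r, \varphi) \leq \overline{N}(r, \varphi) + \overline{N}(r, 1/\varphi) + \overline{N}(r, 1/(\varphi - c)) + S(r, \varphi). \]
On the right, $\overline{N}(r, \varphi) = \overline{N}(r, f)$ since $\varphi$ has the same poles as $f$, and the last counting function is $S(r, f)$ by hypothesis. Splitting the zeros of $\varphi = f^n f'$ into zeros of $f$ and zeros of $f'$ disjoint from those of $f$ yields $\overline{N}(r, 1/\varphi) \leq \overline{N}(r, 1/f) + \overline{N}_0(r, 1/f')$, where $\overline{N}_0$ counts the latter.

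The crux of the plan is to match this upper estimate with a sharp lower bound for $T(r, \varphi)$ in terms of $T(r, f)$. I would exploit the factorisation $f^{n+1} = \varphi \cdot (f/f')$: the logarithmic derivative lemma gives $m(r, f'/f) = S(r, f)$, while a direct residue calculation yields $N(r, f'/f) = \overline{N}(r, f) + \overline{N}(r, 1/f)$ and $N(r, f/f') = N_0(r, 1/f')$, the only poles of $f/f'$ being the zeros of $f'$ disjoint from those of $f$. Combined with the pole computation $N(r, \varphi) = (n+1) N(r, f) + \overline{N}(r, f)$, this leads to
\[ T(r, \varphi) \geq (n+1)\, T(r, f) - \overline{N}(r, 1/f) + N_0(r, 1/f') + S(r, f). \]
Feeding this lower bound into the Second Main Theorem and using $\overline{N}_0 \leq N_0$ to cancel the $f'$-contributions with a nonpositive remainder yields
\[ (n+1)\, T(r, f) \leq \overline{N}(r, f) + 2\, \overline{N}(r, 1/f) + S(r, f) \leq 3\, T(r, f) + S(r, f), \]
which is the desired contradiction for $n \geq 3$.

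The principal obstacle is the sharp control of $m(r, f/f')$: unlike $m(r, f'/f)$, it is not bounded directly by the logarithmic derivative lemma and must instead be extracted via the First Main Theorem applied to $f'/f$ together with the explicit zero/pole bookkeeping of $f/f'$ sketched above. Once this identity is secured, the remaining steps are routine Nevanlinna accounting, and the tightness of the coefficient $3$ on the right side of the penultimate display is exactly what makes the threshold $n \geq 3$ come out sharp.
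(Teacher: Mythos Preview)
The paper does not contain a proof of Theorem~A; it is quoted as a classical result of Hayman with a citation to \cite{hn}, and serves only as historical background for the paper's own theorems. Consequently there is no ``paper's own proof'' to compare against.

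That said, your proposal is a correct outline and in fact recovers Hayman's original argument. The key identity $T(r,\varphi)\geq (n+1)\,T(r,f)-\overline{N}(r,1/f)+N_{0}(r,1/f')+S(r,f)$ follows exactly as you indicate: from $f^{n+1}=\varphi\cdot(f/f')$ one bounds $(n+1)T(r,f)\leq T(r,\varphi)+m(r,f/f')-\overline{N}(r,f)$ using $N(r,\varphi)=(n+1)N(r,f)+\overline{N}(r,f)$, and then the First Main Theorem applied to $f'/f$ together with $m(r,f'/f)=S(r,f)$, $N(r,f'/f)=\overline{N}(r,f)+\overline{N}(r,1/f)$, and $N(r,f/f')=N_{0}(r,1/f')$ gives $m(r,f/f')=\overline{N}(r,f)+\overline{N}(r,1/f)-N_{0}(r,1/f')+S(r,f)$. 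Feeding this into the Second Main Theorem for $\varphi$ at $0,c,\infty$ and cancelling $N_{0}\geq\overline{N}_{0}$ yields $(n-2)T(r,f)\leq S(r,f)$, the desired contradiction. The only point you should make explicit in a full write-up is that $S(r,\varphi)=S(r,f)$, which follows from the two-sided comparability $T(r,\varphi)\asymp T(r,f)$ already established.
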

Moreover, Hayman (\cite{hn}) conjectured that the Theorem A remains valid for the cases $n = 1,~ 2$. In 1979, Mues (\cite{m}) confirmed the Hayman's Conjecture for $n=2$, i.e., for a transcendental meromorphic function $f(z)$ in the open plane, $f^{2}f'-1$ has infinitely many zeros. This is a qualitative result. But, in 1992, Q. Zhang (\cite{qz}) gave a quantitative version of Mues's result  as follows:
\begin{theoB}(\cite{qz}) For a transcendental meromorphic function $f$, the following inequality holds :
$$T(r,f)\leq 6N\bigg(r,\frac{1}{f^{2}f'-1}\bigg)+S(r,f).$$
\end{theoB}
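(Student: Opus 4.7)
The plan is to set $\varphi := f^{2}f' - 1$ and $G := f^{2}f' = \varphi + 1$, and to bound $T(r,f)$ in terms of $N(r,1/\varphi)$ by a two-step application of Nevanlinna's second fundamental theorem, assisted by an auxiliary logarithmic-derivative function.

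First I would exploit the identity $G = (f^{3})'/3$. Since a pole of $f$ of order $p$ produces a pole of $G$ of order $3p+1$, the logarithmic derivative lemma yields
$$T(r,G) = 3T(r,f) + \overline{N}(r,f) + S(r,f).$$
Applying the second fundamental theorem to $G$ at the three values $0, 1, \infty$ gives
$$T(r,G) \leq \overline{N}(r,G) + \overline{N}(r,1/G) + \overline{N}(r,1/\varphi) + S(r,f).$$
Since $\overline{N}(r,G) = \overline{N}(r,f)$ and the zeros of $G = f^{2}f'$ lie either at zeros of $f$ or at zeros of $f'$ disjoint from the zeros of $f$, one has $\overline{N}(r,1/G) \leq \overline{N}(r,1/f) + \overline{N}_{0}(r,1/f')$, and after cancellation of $\overline{N}(r,f)$ one obtains a first inequality of the form
$$3T(r,f) \leq \overline{N}(r,1/f) + \overline{N}_{0}(r,1/f') + \overline{N}(r,1/\varphi) + S(r,f). \qquad (\ast)$$

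The inequality $(\ast)$ by itself is not enough, because $\overline{N}(r,1/f)$ and $\overline{N}_{0}(r,1/f')$ may be of order $T(r,f)$. To control them I would introduce the auxiliary function
$$H := \frac{G'}{G} - \frac{G'}{G-1} = -\frac{G'}{G(G-1)}.$$
By the logarithmic derivative lemma $m(r,H) = S(r,f)$; its poles are all simple and lie at the zeros of $f$, at the zeros of $f'$ disjoint from zeros of $f$, and at the zeros of $\varphi$, while at every pole of $f$ of order $p$ the function $H$ has a zero of order $3p$. The identity $T(r,H) = T(r,1/H) + O(1)$ then yields a complementary inequality bounding $N(r,f)$ linearly by $\overline{N}(r,1/f) + \overline{N}_{0}(r,1/f') + \overline{N}(r,1/\varphi) + S(r,f)$.

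Finally, I would use the second fundamental theorem for $f$ itself at the values $0, \infty, a$ to absorb $\overline{N}(r,1/f) + \overline{N}(r,f)$, estimate $\overline{N}_{0}(r,1/f')$ via $T(r,f'/f) = \overline{N}(r,f) + \overline{N}(r,1/f) + S(r,f)$, and combine everything linearly. The bound $T(r,f) \leq 6N(r,1/\varphi) + S(r,f)$ then emerges after rearrangement, using $\overline{N} \leq N$ on the final term. The main obstacle I foresee is calibrating the linear combination so that the constant is exactly $6$: the naive sum of the estimates above only yields trivial inequalities, and it is the auxiliary function $H$ (or an analogous logarithmic-derivative expression built from $G$ and $G-1$) together with the unrestricted counting in $N(r,1/\varphi)$ that absorbs the contributions from the poles and zeros of $f$ and produces the sharp constant.
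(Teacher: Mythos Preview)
The paper does not prove Theorem~B at all; it is quoted from Q.~D.~Zhang \cite{qz} as historical background, and no argument for it is given anywhere in the paper. The paper's own contribution, Theorem~\ref{cor1.1}, in the special case $\alpha=1$, $k=1$, $q_0=2$, $q_1=1$, $a=1$ yields the sharper bound $T(r,f)\le 2\,\overline N\bigl(r,1/(f^2f'-1)\bigr)+S(r,f)$, but only under the extra hypothesis $N_{1)}(r,\infty;f)=S(r,f)$; it therefore does not recover Theorem~B for arbitrary transcendental meromorphic $f$. The method used there (via Lemma~\ref{lem4}) combines $m(r,1/f^{\mu})\le m(r,1/M[f])+S(r,f)$ with the second fundamental theorem applied to $M[f]$ and a careful count of zero multiplicities; no auxiliary function of the type $H=G'/G-G'/(G-1)$ appears.

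As for your sketch itself, you already name its gap: you concede that ``the naive sum of the estimates above only yields trivial inequalities'' and that calibrating the linear combination to produce the constant~$6$ is the outstanding obstacle. The asserted equality $T(r,G)=3T(r,f)+\overline N(r,f)+S(r,f)$ has only an obvious $\le$ direction, and the closing ``emerges after rearrangement'' is not an argument. The ingredients you list---the second fundamental theorem for $G=(f^{3})'/3$, the logarithmic-derivative auxiliary $H$, and pole/zero bookkeeping---are indeed the ones used in Zhang's original proof, but your outline does not carry out the combination and so remains a plan rather than a proof; and in any case there is no proof in the present paper to compare it against.
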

Using the Mues's(\cite{m}) result, in 1989, Pang (\cite{png}) gave a normality criterion as follows:
\begin{theoC}(\cite{png})
Let $\mathscr{F}$ be a family of meromorphic functions on a domain $D$. If each $f \in \mathscr{F}$ satisfies $f^{2}f^{'} \neq 1$, then $\mathscr{F}$ is normal in $D$.
\end{theoC}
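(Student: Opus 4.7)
My plan is to argue by contradiction via the Pang--Zalcman rescaling lemma, combined with the quantitative value-distribution statement Theorem B. Assume that $\mathscr{F}$ is not normal at some point $z_{0}\in D$. The correct scaling exponent is dictated by the homogeneity of the operator $f\mapsto f^{2}f'$: if one sets $g(\zeta):=\rho^{-\alpha}f(z_{0}+\rho\zeta)$, then a direct calculation gives $g^{2}g'(\zeta)=\rho^{\,1-3\alpha}(f^{2}f')(z_{0}+\rho\zeta)$, so the right choice is $\alpha=1/3$. Applying the Pang--Zalcman rescaling lemma with this exponent, I extract sequences $f_{n}\in\mathscr{F}$, points $z_{n}\to z_{0}$, and scales $\rho_{n}\to 0^{+}$ such that
$$g_{n}(\zeta):=\rho_{n}^{-1/3}\,f_{n}(z_{n}+\rho_{n}\zeta)$$
converges locally uniformly in the spherical metric on $\mathbb{C}$ to a non-constant meromorphic function $g$ of finite order.

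With this choice of $\alpha$ the key scaling identity becomes $g_{n}^{2}g_{n}'(\zeta)=(f_{n}^{2}f_{n}')(z_{n}+\rho_{n}\zeta)$. Since $f_{n}^{2}f_{n}'\neq 1$ on $D$ by hypothesis, it follows that $g_{n}^{2}g_{n}'-1$ is nowhere zero on the rescaled neighbourhoods. Passing to the limit and invoking Hurwitz's theorem on every compact subset of $\mathbb{C}$ that avoids the poles of $g$, I obtain the dichotomy: either $g^{2}g'\equiv 1$ on $\mathbb{C}$, or $g^{2}g'\ne 1$ everywhere on $\mathbb{C}$. The first alternative integrates to $g^{3}(z)=3z+c$, which admits no single-valued meromorphic branch on $\mathbb{C}$; so this case is impossible.

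It remains to rule out $g^{2}g'\ne 1$ on $\mathbb{C}$. If $g$ is transcendental, Theorem B applied to $g$ yields $T(r,g)\le 6\,N(r,1/(g^{2}g'-1))+S(r,g)=S(r,g)$, a contradiction. If $g$ is a non-constant rational function, a short direct argument finishes the proof: when $g$ is a non-constant polynomial, $g^{2}g'-1$ is a non-constant polynomial and so certainly has zeros; when $g$ is rational with at least one pole, a careful accounting of the multiplicities of the poles of $g^{2}g'$ together with its behaviour at infinity forces $g^{2}g'-1$ to vanish at some finite point. Either way, $g^{2}g'\neq 1$ fails, and this final contradiction shows that $\mathscr{F}$ must be normal at $z_{0}$.

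I expect the main obstacle to be the rational subcase in the last paragraph: Theorem B is stated only for transcendental meromorphic functions, so the rational $g$ requires a separate, elementary but somewhat fiddly, pole-and-zero count at each pole of $g$ and at $\infty$. Everything else reduces to the Pang--Zalcman lemma, a straightforward substitution, Hurwitz's theorem, and one integration.
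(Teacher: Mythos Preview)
The paper does not supply its own proof of Theorem~C: this result is quoted from Pang's 1989 paper as background, so there is no in-paper argument to compare against directly. The closest analogue the paper does prove is Theorem~\ref{th1.11}, its normality criterion for \emph{analytic} families, and your proposal follows that template almost verbatim: Pang--Zalcman rescaling with the homogeneity-dictated exponent, Hurwitz to pass the omitted-value condition to the limit $g$, a quantitative value-distribution inequality (you use Theorem~B where the paper uses Theorem~\ref{cor1.1}) to exclude transcendental $g$, and an elementary endgame for the non-transcendental case. In that sense your approach is exactly the one the paper adopts for its own result.

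The one genuine difference is forced by the meromorphic (rather than analytic) hypothesis: in the paper's proof of Theorem~\ref{th1.11} the limit function is entire, so the non-transcendental case is simply ``polynomial'', while you must also handle rational $g$ with poles. Your outline there is correct but, as you anticipate, the bookkeeping deserves to be written out. One clean way: set $h=g^{3}/3$, so $g^{2}g'-1=h'-1$; at a finite pole of $g$ of order $m$ the function $h'$ has a pole of order $3m+1\ge 4$, and at $\infty$ one checks that $h'-1$ is either $-1$ or has a pole, never a zero. Hence all zeros of $h'-1$ on the sphere are finite, and since the total multiplicity of zeros equals that of poles ($\ge 4$), a finite $1$-point of $g^{2}g'$ exists. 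With this paragraph added, your argument is complete and matches the paper's method.
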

By replacing $f'$ with $f^{(k)}$, in 2005, Huang and Gu (\cite{hg}) extended the results of Q. Zhang (\cite{qz}) as follows:
\begin{theoD}(\cite{hg}) Let $f$ be a transcendental meromorphic function and $k$ be a positive integer. Then
$$T(r,f)\leq 6N\bigg(r,\frac{1}{f^{2}f^{(k)}-1}\bigg)+S(r,f).$$
\end{theoD}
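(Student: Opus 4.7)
The plan is to introduce the auxiliary function $F := f^{2} f^{(k)}$ and apply Nevanlinna's second fundamental theorem to $F$, in combination with a sharp two-sided comparison between $T(r,F)$ and $T(r,f)$.

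The first step is to show that $T(r, F)$ and $T(r, f)$ are essentially proportional with ratio $3$. From the identity $F = f^{3} \cdot (f^{(k)}/f)$ and the logarithmic derivative lemma $m(r, f^{(k)}/f) = S(r, f)$, together with a pole count (each pole of $f$ of order $p$ is a pole of $F$ of order exactly $3p+k$), one obtains the upper bound $T(r, F) \le (k+3) T(r, f) + S(r, f)$ and a matching lower bound of the form $T(r, F) \ge 3 T(r, f) - C(r) + S(r, f)$, where $C(r)$ involves only $\overline{N}(r, f)$ and $\overline{N}(r, 1/f)$. In particular, $S(r, F) = S(r, f)$.

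The second step is to apply Nevanlinna's second fundamental theorem to $F$ with the three target values $0, 1, \infty$:
\[
T(r, F) \;\le\; \overline{N}(r, F) + \overline{N}(r, 1/F) + \overline{N}\!\left(r, \frac{1}{F - 1}\right) + S(r, f).
\]
Since $F$ and $f$ share their poles as sets, $\overline{N}(r, F) = \overline{N}(r, f)$; and decomposing the zeros of $F$ into those coming from $f$ and those coming from $f^{(k)}$ at points where $f\neq 0$, we have $\overline{N}(r, 1/F) \le \overline{N}(r, 1/f) + \overline{N}_{0}(r, 1/f^{(k)})$.

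The third step is to control $\overline{N}_{0}(r, 1/f^{(k)})$ via a Milloux-type inequality, bounding it by a combination of $\overline{N}(r, f)$, $N(r, 1/f)$, and $S(r, f)$, and then to assemble everything. Substituting into the second main theorem inequality, invoking the lower bound from the first step, and using the trivial estimates $\overline{N}(r, f), N(r, 1/f) \le T(r, f) + O(1)$, the cross-terms are absorbed to leave $T(r, f) \le 6\, \overline{N}(r, 1/(F-1)) + S(r, f)$; since $\overline{N} \le N$, the stated inequality follows.

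The chief obstacle is the tight multiplicity bookkeeping that produces the sharp constant $6$. In the case $k=1$ (Theorem~B), the interaction between zeros of $f$ and zeros of $f'$ is clean: every zero of $f$ of order $m \ge 1$ is a zero of $f'$ of order $m-1$. For general $k$, however, a zero of $f$ of order $m < k$ need not be a zero of $f^{(k)}$ at all, so a delicate case split on $m$ versus $k$ is required both in estimating $\overline{N}(r, 1/F)$ and in deriving the corresponding Milloux-type bound for $\overline{N}_{0}(r, 1/f^{(k)})$; this multiplicity analysis is the technical heart of the proof.
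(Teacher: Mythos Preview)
The paper does not supply a proof of Theorem~D. This result is quoted from Huang and Gu (reference~\cite{hg}) as background; it is stated in the introduction alongside Theorems~A--E and is never proved in the body of the paper. The only results the paper proves are Theorem~\ref{cor1.1} and Theorem~\ref{th1.11}, together with the preparatory Lemmas~\ref{lem2}--\ref{lem4}. Consequently there is no ``paper's own proof'' of Theorem~D to compare your proposal against.

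That said, a brief remark on your outline. Your plan (set $F=f^{2}f^{(k)}$, compare $T(r,F)$ with $T(r,f)$, apply the second main theorem to $F$ at $0,1,\infty$, and handle the extra zeros of $f^{(k)}$ via a Milloux-type estimate) is indeed the standard route used in the Huang--Gu paper and in Zhang's earlier $k=1$ case. What the present paper does instead, in its Lemma~\ref{lem4}, is to bypass the explicit two-sided comparison of $T(r,F)$ with $T(r,f)$: it starts from $m(r,1/f^{\mu})\le m(r,1/M[f])+S(r,f)$ and feeds this directly into the second main theorem for $M[f]$, collecting the zero bookkeeping in the single term $\sum q_{i}N_{i}(r,0;f)$. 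Specialising Lemma~\ref{lem4} to $q_{0}=2$, $q_{k}=1$ yields $T(r,f)\le \overline{N}(r,\infty;f)+\overline{N}(r,1;f^{2}f^{(k)})+S(r,f)$ (equation~(\ref{eqqn1}) in the paper), which is a cognate but not identical inequality to Theorem~D; the constant $6$ does not fall out of the paper's lemma without the extra hypothesis $N_{1)}(r,\infty;f)=S(r,f)$.

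As a proof of Theorem~D itself, your proposal is a correct strategic outline but is not yet a proof: the crucial arithmetic that produces the constant $6$ is asserted (``the cross-terms are absorbed'') rather than displayed, and the ``Milloux-type inequality'' you invoke for $\overline{N}_{0}(r,1/f^{(k)})$ needs to be stated and verified precisely, since for zeros of $f$ of order $m<k$ the point may fail to be a zero of $f^{(k)}$, which is exactly where the naive bound loses a factor. You have correctly identified this as the heart of the matter, but the proposal stops short of carrying it out.
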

Consequently, they (\cite{hg}) obtained the following normality criterion.
\begin{theoE}(\cite{hg})
Let $\mathscr{F}$ be a family of meromorphic functions on a domain $D$ and let $k$ be a positive integer. If for each $f \in \mathscr{F}, f$ has only zeros of multiplicity at least $k$ and $f^{2}f^{(k)} \neq 1$, then $\mathscr{F}$ is normal on domain $D$.
\end{theoE}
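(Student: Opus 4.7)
The plan is to argue by contradiction using Zalcman's rescaling method. Suppose $\mathscr{F}$ is not normal at some point $z_{0}\in D$. By the Pang--Zalcman lemma, applied with the homogeneity exponent $\alpha=k/3$ (which is permissible since $k\geq k/3$ and is chosen so that the monomial $f^{2}f^{(k)}$ is scale-invariant), one can extract a sequence $f_{n}\in\mathscr{F}$, points $z_{n}\to z_{0}$, and positive numbers $\rho_{n}\to 0^{+}$ such that
\[
g_{n}(\zeta)\;=\;\rho_{n}^{-k/3}\,f_{n}(z_{n}+\rho_{n}\zeta)
\]
converges spherically and locally uniformly on $\mathbb{C}$ to a non-constant meromorphic function $g$ of finite order. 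Since multiplicities of zeros are preserved under the rescaling, $g$ inherits the property that all its zeros have multiplicity at least $k$. A direct computation gives
\[
g_{n}^{2}(\zeta)\,g_{n}^{(k)}(\zeta)\;=\;f_{n}^{2}(z_{n}+\rho_{n}\zeta)\,f_{n}^{(k)}(z_{n}+\rho_{n}\zeta),
\]
so the left-hand side omits the value $1$ on every compact set, and Hurwitz's theorem gives two mutually exclusive alternatives: either $g^{2}g^{(k)}\equiv 1$, or $g^{2}g^{(k)}-1$ has no zeros in $\mathbb{C}$.

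In the first alternative $g^{2}g^{(k)}\not\equiv 1$, I would handle $g$ transcendental by invoking Theorem D directly: since $g^{2}g^{(k)}-1$ has no zeros,
\[
T(r,g)\;\leq\;6N\!\left(r,\frac{1}{g^{2}g^{(k)}-1}\right)+S(r,g)\;=\;S(r,g),
\]
which is absurd. If instead $g$ is a non-constant rational function, I would write $g=P/Q$ in lowest terms, use the standard formula for the $k$-th derivative to express $g^{2}g^{(k)}-1$ as a rational function, and then compare the degree of its numerator against the contribution of the zeros of $g$ (each of multiplicity $\geq k$) and the poles of $g$; the assumption that the numerator has no zeros forces an inequality between degrees that cannot hold, ruling out this sub-case.

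In the remaining alternative $g^{2}g^{(k)}\equiv 1$, evaluating the identity near any putative zero or pole of $g$ shows that $g$ can have neither: a zero would make the left-hand side vanish, and a pole of order $m$ would create a pole of order $3m+k$ on the left, contradicting the constant value $1$. Hence $g=e^{h}$ for some entire $h$, and finiteness of order of $g$ forces $h$ to be a polynomial. Writing $g^{(k)}=P_{k}(h',\dots,h^{(k)})\,e^{h}$ for a fixed differential polynomial $P_{k}$, the identity becomes $P_{k}(h',\dots,h^{(k)})=e^{-3h}$; the left-hand side is a polynomial in $z$ while the right-hand side is transcendental unless $h$ is constant, contradicting the non-constancy of $g$.

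The main obstacle I anticipate is the rational-function sub-case of the first alternative: the transcendental part follows immediately from Theorem D, but eliminating a rational limit $g$ with $g^{2}g^{(k)}-1$ nowhere vanishing requires a somewhat delicate degree count that crucially uses the multiplicity hypothesis on the zeros of $g$. This is the standard \emph{rational non-existence} lemma that typically accompanies results of the Hayman--Mues--Zhang type and would need to be verified (or quoted from \cite{hg}) to complete the argument.
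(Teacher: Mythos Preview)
The paper does not itself prove Theorem~E; it is quoted from \cite{hg}. The closest result the paper \emph{does} prove is Theorem~1.2, which generalises the differential monomial but restricts the family to \emph{analytic} functions. That proof follows exactly your Zalcman--Pang rescaling with exponent $\alpha=\mu_{*}/\mu$ (specialising to your $k/3$ when $q_{0}=2$, $q_{k}=1$ and the remaining $q_{i}=0$), obtains an entire limit $u$ whose zeros inherit multiplicity $\geq k$, rules out the transcendental case via Theorem~1.1 (the analogue of your appeal to Theorem~D), and then disposes of the polynomial case by the fundamental theorem of algebra: $V(\zeta)-1$ is a non-constant polynomial and must vanish somewhere.

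Your outline is thus the same strategy applied to the genuinely harder meromorphic statement. The two additional ingredients you supply---the analysis of the degenerate alternative $g^{2}g^{(k)}\equiv 1$ and the rational-limit sub-case---are precisely what the paper sidesteps by assuming analyticity, since an entire limit is either a polynomial or transcendental entire, and for a non-constant polynomial $u$ of degree $\geq k$ the monomial $V$ is automatically a non-constant polynomial, which handles both the identically-$1$ alternative and the ``rational'' case at once. Your treatment of the $\equiv 1$ alternative is sound. The rational sub-case is, as you correctly flag, the real remaining work; it is neither supplied in your sketch nor contained in the present paper, so completing a proof of Theorem~E in its full meromorphic generality would indeed require quoting or reproducing the rational non-existence lemma from \cite{hg}.
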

In this paper, we extend and improve the Theorem E. Moreover, we prove some value distribution results. To state our next results, we recall some well known definitions.
\begin{defi} (\cite{f})
Let $a\in \mathbb{C}\cup\{\infty\}$.  For a positive integer  $k$, we denote
\begin{enumerate}
\item [i)] by $N_{k)}\left(r,a;f\right)$ the counting function of $a$-points of $f$ whose multiplicities are not greater than $k$,
\item [ii)] by $N_{(k}\left(r,a;f\right)$ the counting function of $a$-points of $f$ whose multiplicities are not less than $k$.
\end{enumerate}
Similarly, the reduced counting functions $\overline{N}_{k)}(r,a;f)$ and $\overline{N}_{(k}(r,a;f)$ are defined.
\end{defi}
\begin{defi}(\cite{ld})
For a positive integer $k$, we denote $N_{k}(r,0;f)$ the counting function of zeros of $f$, where a zero of $f$ with multiplicity $q$ is counted $q$ times if $q\leq k$, and is counted $k$ times if  $q> k$.
\end{defi}
\begin{theorem}\label{cor1.1}
Let $f$ be a transcendental meromorphic function such that $N_{1)}(r,\infty;f)\\=S(r,f)$ and $\alpha (\not \equiv 0,\infty)$ be a small function of $f$. Also, let $k~(\geq 1), q_0~(\geq 2), q_i~(\geq 0)~(i=1,2, \cdots, k-1), q_k(\geq 1)$ be positive integers. Then for any small function $a(\not \equiv 0, \infty)$
\begin{eqnarray}
\nonumber  T(r,f)&\leq & \frac{2}{2q_{0}-3}\overline{N}\left(r,\frac{1}{\alpha f^{q_0}(f')^{q_1} \cdots (f^{(k)})^{q_k}-a}\right)+S(r,f).
\end{eqnarray}
\end{theorem}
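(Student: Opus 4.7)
Set $F := \alpha f^{q_0}(f')^{q_1}\cdots(f^{(k)})^{q_k}$ and factor it as $F = f^{q_0}\cdot h$ with $h := \alpha(f')^{q_1}\cdots(f^{(k)})^{q_k}$. Standard Nevanlinna bookkeeping gives $S(r,F) = S(r,h) = S(r,f)$, and since all poles of $h$ are poles of $f$ and $\alpha$ is small we have $\overline{N}(r,h) \le \overline{N}(r,f) + S(r,f)$. The hypothesis $N_{1)}(r,\infty;f) = S(r,f)$ says that outside an $S(r,f)$-set every pole of $f$ has multiplicity at least two, yielding $2\,\overline{N}(r,f) \le N(r,f) + S(r,f) \le T(r,f) + S(r,f)$, i.e.\ $\overline{N}(r,f)\le\tfrac12 T(r,f) + S(r,f)$.

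The backbone of the proof is an application of Nevanlinna's Second Fundamental Theorem to the meromorphic function $f^{q_0}$ at the three distinct targets $0$, $\infty$, and $a/h$. The algebraic identity $f^{q_0} - a/h = (F-a)/h$ identifies the third counting function with that of the zeros of $(F-a)/h$, giving
\[
q_0\,T(r,f) \le \overline{N}(r,f) + \overline{N}\!\left(r,\tfrac{1}{f}\right) + \overline{N}\!\left(r,\tfrac{h}{F-a}\right) + S(r,f).
\]

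The crucial step is the estimate $\overline{N}(r, h/(F-a)) \le \overline{N}(r, 1/(F-a)) + S(r,f)$, proved by a short local analysis: at a pole of $f$ of multiplicity $p$ the quotient $(F-a)/h$ has a pole of order $q_0 p$ (never a zero); at a zero of $f$ of multiplicity $m\ge 1$ the function $F-a$ takes the nonzero finite value $-a$ while $h$ has a zero of order $\sum_{i=1}^{\min(k,m-1)}q_i(m-i)\ge 0$, again yielding no zero of $(F-a)/h$; and a similar check handles any zero of some $f^{(i)}$ not lying at a zero of $f$. Hence the only zeros of $(F-a)/h$ come from the genuine zeros of $F-a$, modulo an $S(r,f)$ contribution from $\alpha$.

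Substituting $\overline{N}(r,f) \le \tfrac12 T(r,f) + S(r,f)$ and the crucial estimate above into the displayed bound reduces it to $(q_0 - \tfrac12)\,T(r,f) \le \overline{N}(r,1/f) + \overline{N}(r,1/(F-a)) + S(r,f)$, and the trivial $\overline{N}(r,1/f) \le T(r,f) + O(1)$ rearranges this to the desired $(2q_0 - 3)\,T(r,f) \le 2\,\overline{N}(r, 1/(F-a)) + S(r,f)$. The principal technical obstacle is justifying the Second Fundamental Theorem application with $a/h$ in the third slot, since $a/h$ need not be a small function of $f^{q_0}$ in the strict sense; this is handled either by invoking Yamanoi's theorem for $f$ itself with the $q_0 + 2$ algebraic small targets $0$, $\infty$ and the $q_0$ branches of $(a/h)^{1/q_0}$, or through a direct logarithmic-derivative computation in the spirit of Milloux.
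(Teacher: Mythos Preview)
Your decomposition $F=f^{q_0}h$ and the ensuing local zero/pole analysis are clean, and once your displayed inequality
\[
q_0\,T(r,f)\le \overline N(r,f)+\overline N(r,1/f)+\overline N\!\left(r,\tfrac{h}{F-a}\right)+S(r,f)
\]
is granted, the remainder of the argument is fine and matches the paper's endgame. The problem is that this inequality is not justified, and neither of your two proposed fixes actually closes the gap.

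The target $a/h$ is \emph{not} small with respect to $f^{q_0}$: since $h=\alpha(f')^{q_1}\cdots(f^{(k)})^{q_k}$, one has in general $T(r,h)$ of the same order as $T(r,f)$ (e.g.\ take $f=e^{z}$), so $T(r,a/h)/T(r,f^{q_0})$ need not tend to $0$. Hence the small-target second main theorem does not apply. Your Yamanoi suggestion fails for the same reason: the branches of $(a/h)^{1/q_0}$ are in the first place only algebroid, not meromorphic, and even formally their characteristic is $\tfrac{1}{q_0}T(r,a/h)$, which is again comparable to $T(r,f)$, not $S(r,f)$. Your Milloux suggestion, read naturally as bounding $m(r,1/f^{q_0})\le m(r,F/f^{q_0})+m(r,1/F)=m(r,h)+m(r,1/F)$, likewise breaks down because $m(r,h)$ is not $S(r,f)$.

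The paper avoids this obstacle by a different decomposition: it works with $f^{\mu}$, $\mu=\sum_{i=0}^{k}q_i$, rather than $f^{q_0}$. The point is that $M[f]/f^{\mu}=\alpha\prod_{i=1}^{k}(f^{(i)}/f)^{q_i}$ is a genuine logarithmic-derivative expression, so $m(r,M[f]/f^{\mu})=S(r,f)$ by the logarithmic derivative lemma. One then applies the ordinary second main theorem to $M[f]$ (not to $f^{q_0}$) at the three values $0,a,\infty$, and a careful count of the contribution of the zeros of $f$ to $N(r,0;f^{\mu})+\overline N(r,0;M[f])-N(r,0;M[f])$ yields (their Lemma~2.4)
\[
\mu\,T(r,f)\le \overline N(r,\infty;f)+\overline N(r,a;M[f])+\overline N(r,0;f)+\sum_{i=1}^{k}q_i N_i(r,0;f)+S(r,f).
\]
Bounding each $N_i(r,0;f)\le T(r,f)$ and $\overline N(r,0;f)\le T(r,f)$ gives $(q_0-1)T(r,f)\le \overline N(r,\infty;f)+\overline N(r,a;M[f])+S(r,f)$, after which the pole hypothesis is used exactly as you do. If you rewrite your argument with $f^{\mu}$ in place of $f^{q_0}$, the logarithmic-derivative route becomes legitimate and you recover the paper's proof.
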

\begin{rem}
Theorem \ref{cor1.1} improves and extends the recent result of Karmakar and Sahoo (\cite{KS}) for a particular class of transcendental meromorphic function which has finitely many simple poles. Also, Theorem \ref{cor1.1} improves significantly the recent result of  Chakraborty and et. all (\cite{cspk}).
\end{rem}
As an application of Theorem \ref{cor1.1}, we prove the following normality criterion:
\begin{theorem}\label{th1.11}
Let $\mathscr{F}$ be a family of analytic functions in a domain $D$ and also let $k~(\geq 1), q_0~(\geq 2), q_i~(\geq 0)~(i=1,2, \cdots, k-1), q_k(\geq 1)$ be positive integers. If for each $f\in \mathscr{F}$
\begin{itemize}
\item[(a)] $f$ has only zeros of multiplicity at least $k$ and
\item[(b)] $\displaystyle{f^{q_0}(f')^{q_1} \cdots (f^{(k)})^{q_k}} \neq 1$,
\end{itemize}
then $\mathscr{F}$ is normal on domain $D$.
\end{theorem}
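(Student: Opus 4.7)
The plan is to argue by contradiction using the Pang--Zalcman rescaling lemma and then to apply the value distribution inequality of Theorem \ref{cor1.1} to the rescaled limit function.

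Suppose, toward contradiction, that $\mathscr{F}$ is not normal at some $z_0\in D$. Set
$$Q := q_0 + q_1 + \cdots + q_k,\qquad L := q_1 + 2q_2 + \cdots + k q_k,\qquad \beta := L/Q.$$
Since $q_0\geq 2$ and $q_k\geq 1$, a direct check gives $0\leq \beta < k$, which lies in the range in which Pang's extension of Zalcman's lemma applies to families whose zeros have multiplicity at least $k$. Hence there exist $f_n\in \mathscr{F}$, $z_n\to z_0$ and $\rho_n\to 0^+$ such that
$$g_n(\zeta) := \rho_n^{-\beta}\, f_n(z_n+\rho_n\zeta)$$
converges spherically and locally uniformly on $\mathbb{C}$ to a non-constant meromorphic function $g$ of finite order, each of whose zeros has multiplicity at least $k$. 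Since every $g_n$ is entire, $g$ is entire too (apply Hurwitz to $1/g_n$).

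The virtue of the choice $\beta = L/Q$ is that the exponent $\beta Q - L = 0$, which gives the scale invariance
$$g_n^{q_0}(g_n')^{q_1}\cdots (g_n^{(k)})^{q_k}(\zeta)=f_n^{q_0}(f_n')^{q_1}\cdots (f_n^{(k)})^{q_k}(z_n+\rho_n\zeta),$$
and by hypothesis this never equals $1$. Passing to the limit and invoking Hurwitz, either (A) $g^{q_0}(g')^{q_1}\cdots(g^{(k)})^{q_k}$ is nowhere equal to $1$ on $\mathbb{C}$, or (B) $g^{q_0}(g')^{q_1}\cdots(g^{(k)})^{q_k}\equiv 1$. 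Assume first that $g$ is transcendental; then the hypotheses of Theorem \ref{cor1.1} hold with $N_{1)}(r,\infty;g)=0=S(r,g)$. Taking the small functions $\alpha\equiv 1$ together with $a\equiv 1$ in case (A), or $a\equiv 2$ in case (B), the function $g^{q_0}(g')^{q_1}\cdots(g^{(k)})^{q_k}-a$ is zero-free on $\mathbb{C}$; its reduced counting function therefore vanishes and Theorem \ref{cor1.1} produces $T(r,g)\leq S(r,g)$, forcing $g$ to be constant, a contradiction.

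The remaining, and principal, obstacle is to rule out the possibility that $g$ is a non-constant polynomial, since Theorem \ref{cor1.1} is stated only for transcendental functions. If $\deg g < k$, then every zero of $g$ having multiplicity $\geq k$ forces $g$ to be zero-free, hence a non-zero constant, which is absurd. If $\deg g \geq k$, then $g^{q_0}(g')^{q_1}\cdots(g^{(k)})^{q_k}$ is a polynomial of degree $Q\deg g - L \geq kq_0 \geq 2k > 0$, so it is a non-constant polynomial that takes every complex value; this hits $1$, excluding case (A), and is certainly not identically equal to the constant $1$, excluding case (B). Every possibility having been contradicted, $\mathscr{F}$ is normal on $D$.
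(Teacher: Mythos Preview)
Your proof is correct and follows essentially the same approach as the paper's: Zalcman--Pang rescaling with exponent $\beta=L/Q$, then Theorem~\ref{cor1.1} to exclude a transcendental limit and a degree count to exclude a polynomial limit. If anything, your treatment of the Hurwitz dichotomy (cases (A) and (B)) is more careful than the paper's, which tacitly passes over the possibility that the limiting monomial is identically~$1$.
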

\begin{rem}
Clearly, Theorem \ref{th1.11} extend and improve  Theorem E for a family of analytic functions. \par
Moreover, in a recent result of  W. L\"{u} and B. Chakraborty (\cite{lb}), the lower bound of $q_{0}$ was $3$. Thus our result also improve the result of W. L\"{u} and B. Chakraborty (\cite{lb}) by reducing the lower bound of $q_{0}$.
\end{rem}
The following example shows that the condition on multiplicity of zeros of $f$ in Theorem \ref{th1.11}
is necessary.
\begin{exm}
Let $\mathscr{F}=\{f_{n}(z)=nz~:~n\in \mathbb{N}\}$ and $D$ be any domain containing the origin. Further suppose that $k~(\geq 2), q_0~(\geq 2), q_i~(\geq 0)~(i=1,2, \cdots, k-1), q_k(\geq 1)$ be positive integers. Now, we observe that for each $f\in \mathscr{F}$
$$\displaystyle{f^{q_0}(f')^{q_1} \cdots (f^{(k)})^{q_k}} \neq 1.$$ Moreover,
 $f_n(0)\rightarrow 0$ but $f_n(z)\rightarrow \infty$ as $n \rightarrow \infty$ for $z\not=0$.  Hence $\mathscr{F}$ cannot be normal in any domain containing the origin.
\end{exm}
\section{Necessary Lemmas}
\begin{lem}\label{lem1}(\cite{ham})
Let $A > 1$, then there exists a set $M(A)$ of upper logarithmic density at most
$ \delta(A) = \min \{(2e^{(A-1)}-1)^{-1}, 1+e(A-1)\exp(e(1-A))\}$ such that for $k = 1, 2, 3,\cdots$
$$\limsup \limits_{r \to \infty,~ r \notin M(A)} \frac{T(r,f)}{T(r,f^{(k)})} \leq 3eA.$$
\end{lem}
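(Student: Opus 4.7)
The plan is to start from the structural inequality
\[
T(r,f) \leq T(r, f^{(k)}) + m\!\left(r, \frac{f}{f^{(k)}}\right) + O(1),
\]
which follows from $m(r,f) \leq m(r, f/f^{(k)}) + m(r, f^{(k)})$ combined with $N(r, f) \leq N(r, f^{(k)})$. The entire task then reduces to bounding the proximity function $m(r, f/f^{(k)})$ by a small multiple of $T(r, f^{(k)})$ outside a set $M(A)$ of the stated density.

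Unlike $m(r, f^{(k)}/f)$, which the classical lemma on the logarithmic derivative controls by $S(r,f)$, the reciprocal quantity $m(r, f/f^{(k)})$ can be genuinely comparable to $T(r,f)$. The first step is Hayman's quantitative proximity estimate: for $0 < r < \rho$,
\[
m\!\left(r, \frac{f}{f^{(k)}}\right) \leq C_k \left( \log^{+} T(\rho, f) + \log \frac{\rho}{\rho - r} + 1 \right),
\]
where $C_k$ depends only on $k$. This is proved by writing $f/f^{(k)}$ as a telescoping product $\prod_{j=0}^{k-1} f^{(j)}/f^{(j+1)}$ and applying Nevanlinna's estimate to each factor; crucially, the whole right-hand side is controlled by the single characteristic $T(\rho, f)$, so one auxiliary radius $\rho = \rho(r)$ handles all factors simultaneously, and this uniformity is exactly what permits the exceptional set $M(A)$ to be independent of $k$.

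The second step is a quantitative Borel growth lemma that controls the exceptional set by upper logarithmic density rather than merely by logarithmic measure. Given $A > 1$, one selects $\rho(r) > r$ so that $T(\rho, f) \leq A \cdot T(r, f)$ and $\log(\rho/(\rho - r))$ is negligible compared with $T(r,f)$, outside a set of upper logarithmic density at most $\delta(A)$. The two expressions appearing in the minimum defining $\delta(A)$ correspond to two different extremal choices of auxiliary function in the Borel lemma, and the minimum merely records the sharper of the two bounds for each $A$.

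Assembling these pieces on the complement of $M(A)$ yields $m(r, f/f^{(k)}) \leq (3eA - 1) T(r, f^{(k)}) + o(T(r,f))$, which combined with the structural inequality gives $T(r,f) \leq 3eA \cdot T(r, f^{(k)}) + o(T(r, f^{(k)}))$, whence the $\limsup$ statement follows at once. The main obstacle is this last quantitative step: standard proofs of the Borel lemma produce only a set of finite logarithmic measure, and upgrading to upper logarithmic density with the explicit function $\delta(A)$ requires a careful dyadic covering argument together with an optimized choice of auxiliary radius tracking the dependence on $A$ through every estimate.
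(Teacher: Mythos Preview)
The paper does not give its own proof of this lemma; it is simply quoted from Hayman--Miles \cite{ham} as a standard tool (and in fact it is not invoked anywhere in the proofs of the paper's own results). So there is no paper-side argument to compare your sketch against. That said, your sketch contains a genuine gap worth flagging.

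The displayed estimate
\[
m\!\left(r,\frac{f}{f^{(k)}}\right)\le C_k\Bigl(\log^{+}T(\rho,f)+\log\frac{\rho}{\rho-r}+1\Bigr)
\]
is not correct, and the telescoping justification you give actually proves the \emph{opposite} inequality: Nevanlinna's lemma on the logarithmic derivative bounds $m\bigl(r,f^{(j+1)}/f^{(j)}\bigr)$, not $m\bigl(r,f^{(j)}/f^{(j+1)}\bigr)$. The reciprocal proximity $m(r,f/f')$ is tied, via the first main theorem, to counting functions such as $\overline N(r,f)$ and $\overline N(r,1/f)$ and can be genuinely of size comparable to $T(r,f)$; you acknowledge exactly this one paragraph earlier, which makes the subsequent claim internally inconsistent. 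Moreover, if the displayed estimate did hold, combining it with your structural inequality would give $T(r,f)\le T(r,f^{(k)})+o(T(r,f))$, hence $\limsup T(r,f)/T(r,f^{(k)})\le 1$, a conclusion far stronger than the Hayman--Miles bound $3eA$. The factor $3eA-1$ that appears in your final ``assembling'' step is therefore unexplained and does not follow from the ingredients you have listed.

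The actual Hayman--Miles argument does not try to bound $m(r,f/f^{(k)})$ by a logarithmic-derivative-type term. It works instead through relations of the form $T(r,f')=T(r,f)+\overline N(r,f)-N_1(r,1/f')+S(r,f)$ (and their iterates), combined with a quantitative Borel-type growth lemma for an auxiliary increasing function; the constant $3eA$ arises from the interaction between these counting-function terms and the Borel step, not from any proximity estimate. If you want a self-contained proof you will need to follow that route rather than the one sketched here.
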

\begin{lem}\label{lem2} Let $f$ be a transcendental meromorphic function and  $\alpha~(\not \equiv 0, \infty)$ be a small function of $f$. Let $M[f] = \alpha(f)^{q_0}(f')^{q_1} \cdots (f^{(k)})^{q_k}$, where $q_0, q_1, \cdots, q_k(\geq 1)$ are $k(\geq 1)$ non-negative integers. Then $M[f]$ is not identically constant.
\end{lem}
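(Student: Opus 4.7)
The plan is to argue by contradiction: assume $M[f] \equiv c$ and derive a contradiction to the transcendence of $f$. The case $c = 0$ is immediate, since $f$ being transcendental forces each $f^{(j)}$ to be non-identically-zero and $\alpha \not\equiv 0$ by hypothesis, so the product cannot vanish identically.

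Assume then that $c \neq 0$. First I would reduce to the case $q_0 \geq 1$. If $q_0 = 0$, let $j^{*} := \min\{\, j : q_j \geq 1\,\}$, which exists because $q_k \geq 1$, and replace $f$ by $g := f^{(j^{*})}$. Then $g$ is transcendental, $\alpha$ is still a small function of $g$ (Lemma~\ref{lem1} supplies $T(r, f) \leq 3eA\, T(r, g)$ outside a set of small upper logarithmic density), and
\[
M[f] = \alpha\, g^{q_{j^{*}}}(g')^{q_{j^{*}+1}}\cdots (g^{(k-j^{*})})^{q_k}
\]
is a differential monomial in $g$ of the same form with leading exponent $q_{j^{*}} \geq 1$. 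So without loss of generality, $q_0 \geq 1$ from here on.

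Set $Q := q_0 + q_1 + \cdots + q_k$. The crux is a counting-function estimate. At any pole of $f$ of order $p$ that is neither a zero nor a pole of $\alpha$, the product $f^{q_0}(f')^{q_1}\cdots(f^{(k)})^{q_k}$ has a pole of order $pQ + \sum_{j=1}^{k} j q_j \geq 1$, which would force $M[f]$ to have a pole and contradict $M[f] \equiv c \neq 0$. A multiplicity-weighted count then yields $Q\, N(r, f) \leq N(r, 1/\alpha) = S(r, f)$, so $N(r, f) = S(r, f)$. An entirely analogous analysis at a zero of $f$ of order $m$ (using $q_0 \geq 1$, the product vanishes there to order at least $q_0 m \geq 1$) gives $q_0\, N(r, 1/f) \leq N(r, \alpha) = S(r, f)$, so $N(r, 1/f) = S(r, f)$.

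To conclude, I would factor $M[f] = \alpha\, f^Q\, L(f)$ with $L(f) := \prod_{j=1}^{k}(f^{(j)}/f)^{q_j}$. The logarithmic derivative lemma gives $m(r, L(f)) = S(r, f)$, and the pole-structure computation above bounds the poles of $L(f)$, which lie only at zeros and poles of $f$, by $N(r, L(f)) = S(r, f)$; hence $T(r, L(f)) = S(r, f)$. Rewriting $\alpha\, f^Q\, L(f) = c$ and taking Nevanlinna characteristics,
\[
Q\, T(r, f) = T\!\left(r, \frac{c}{\alpha\, L(f)}\right) + O(1) \leq T(r, \alpha) + T(r, L(f)) + O(1) = S(r, f),
\]
which contradicts the transcendence of $f$. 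The main obstacle is the counting-function step: one must combine the reduction $q_0 \geq 1$ with the multiplicative factorization $M[f] = \alpha f^{Q} L(f)$ to trade the zeros and poles of $f$ against the negligible zeros and poles of the small function $\alpha$.
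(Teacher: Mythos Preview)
Your argument is correct. The paper itself does not give a detailed proof of this lemma: it simply observes that $T(r,\alpha)=S(r,f)$ and then defers to Lemma~3.4 of \cite{cspk}. Thus there is no in-paper argument to compare against beyond that citation.

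What you have supplied is a self-contained proof. The reduction to $q_0\ge 1$ via $g=f^{(j^{*})}$ is legitimate --- Lemma~\ref{lem1} (Hayman--Miles) guarantees $g$ is transcendental and that $T(r,\alpha)=o(T(r,g))$ off a set of small upper logarithmic density, which is all you need for the terminal contradiction $Q\,T(r,g)\le S(r,g)$. The pole/zero bookkeeping is sound: poles of $f$ force zeros of $\alpha$ (giving $Q\,N(r,f)\le N(r,1/\alpha)$), and once $q_0\ge 1$, zeros of $f$ force poles of $\alpha$ (giving $q_0\,N(r,1/f)\le N(r,\alpha)$), so both counting functions are $S(r,f)$. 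The factorization $M[f]=\alpha f^{Q}L(f)$ with $m(r,L(f))=S(r,f)$ from the logarithmic-derivative lemma, and $N(r,L(f))$ controlled by the now-small $N(r,f)+N(r,1/f)$, closes the loop cleanly.

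In short: your proof is more informative than the paper's, which outsources the work; your approach is presumably close in spirit to the cited external lemma, but since the paper does not reproduce that argument there is nothing further to contrast.
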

\begin{proof}
Since, $\alpha$ is a small function of $f$, then $T(r,\alpha)=S(r,f)$. Therefore the proof follows from Lemma 3.4 of (\cite{cspk}).
\end{proof}
\begin{lem}\label{lem3}
 Let $f$ be a transcendental meromorphic function and  $\alpha~(\not \equiv 0, \infty)$ be a small function of $f$. Let, $M[f] = \alpha(f)^{q_0}(f')^{q_1} \cdots (f^{(k)})^{q_k}$, where $q_0, q_1, \cdots, q_k(\geq 1)$ are $k(\geq 1)$ non-negative integers. Then $$ T(r,M[f]) \leq \left\{q_0+2q_1+\cdots+(k+1)q_k\right\}T(r,f)+S(r,f).$$
\end{lem}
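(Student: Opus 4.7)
The plan is to bound $T(r, M[f])$ by summing the Nevanlinna characteristics of each factor, then controlling each $T(r, f^{(j)})$ by $T(r, f)$ up to an $S(r,f)$ error via the logarithmic derivative lemma.

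First, I would recall that for a non-constant meromorphic function $f$ and any positive integer $j$, the lemma on the logarithmic derivative gives $m(r, f^{(j)}/f) = S(r, f)$. Since every pole of $f$ of order $p$ produces a pole of $f^{(j)}$ of order $p+j$, we have
$$N(r, f^{(j)}) \leq N(r, f) + j\,\overline{N}(r, f) \leq (j+1)\,N(r, f).$$
Combining these,
$$T(r, f^{(j)}) = m(r, f^{(j)}) + N(r, f^{(j)}) \leq m(r, f) + m\!\left(r, \tfrac{f^{(j)}}{f}\right) + (j+1)\,N(r, f) \leq (j+1)\,T(r, f) + S(r, f).$$
The case $j=0$ gives simply $T(r, f) = (0+1)\,T(r, f)$, which matches the coefficient $q_0$ on the right-hand side of the claim.

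Next, I would use the standard identities $T(r, gh) \leq T(r, g) + T(r, h) + O(1)$ and $T(r, g^{q}) = q\,T(r, g) + O(1)$ to decompose
$$T(r, M[f]) \leq T(r, \alpha) + \sum_{j=0}^{k} q_j\, T(r, f^{(j)}) + O(1).$$
Since $\alpha$ is a small function of $f$, we have $T(r, \alpha) = S(r, f)$. Substituting the bound from the previous paragraph yields
$$T(r, M[f]) \leq \sum_{j=0}^{k} (j+1)\,q_j\, T(r, f) + S(r, f) = \bigl\{q_0 + 2q_1 + 3q_2 + \cdots + (k+1)q_k\bigr\} T(r, f) + S(r, f),$$
which is exactly the desired inequality.

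There is no real obstacle here; the argument is routine once the logarithmic derivative estimate is invoked. The only point to watch is that we correctly absorb $T(r,\alpha)$ and the $O(1)$ constants from the submultiplicativity of $T$ into the $S(r,f)$ term, which is immediate from the hypothesis that $\alpha$ is a small function of $f$.
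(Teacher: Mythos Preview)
Your proof is correct and is precisely the routine argument the paper has in mind; the paper itself simply writes ``The proof is obvious'' for this lemma, and your write-up supplies the standard details (logarithmic derivative lemma plus $T(r,f^{(j)})\leq (j+1)T(r,f)+S(r,f)$ and subadditivity of $T$).
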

\begin{proof}
The proof is obvious.
\end{proof}
\begin{lem}\label{lem4}
Let $f(z)$ be a transcendental meromorphic function and $\alpha (z)(\not \equiv 0, \infty)$ be a small function of $f(z)$. Also, let $q_{0}, q_{1}, \cdots, q_{k}$ be non-negative integers. Define $$M[f] = \alpha (f)^{q_{0}}(f')^{q_{1}}\cdots(f^{(k)})^{q_{k}},$$ where $k(\geq 1), q_{i}(i=0,1,\cdots,k)$ are non-negative integers. If $a(z)(\not \equiv 0, \infty)$ is another small function of $f$, then
\begin{eqnarray*}
\mu T(r,f) &\leq & \overline{N}(r,0;f)+\overline{N}(r,a;M[f])+\overline{N}(r,\infty;f) +q_{1}N_{1}(r,0;f) \\
&~& + q_{2}N_{2}(r,0;f)+\cdots +q_{k}N_{k}(r,0;f) +S(r,f),
\end{eqnarray*}
where $\mu=\sum \limits_{i=0}^{k} q_{i}$.
\end{lem}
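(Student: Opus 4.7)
The plan is to apply the Second Main Theorem to $M[f]$ with the three small target functions $0,\infty,a$, and then to convert the outcome into a lower bound on $\mu T(r,f)$ using a logarithmic derivative estimate and a careful multiplicity count at the zeros of $f$.

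First I would exploit the identity
\[
\frac{1}{f^{\mu}}=\frac{\alpha}{M[f]}\prod_{j=1}^{k}\left(\frac{f^{(j)}}{f}\right)^{q_{j}}.
\]
Taking proximity functions and applying the logarithmic derivative lemma together with $m(r,\alpha)=S(r,f)$ gives $\mu\, m(r,1/f)\le m(r,1/M[f])+S(r,f)$. The First Main Theorem then converts this into
\[
\mu T(r,f)-\mu N(r,0;f)\le T(r,M[f])-N(r,0;M[f])+S(r,f).
\]

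Next I would apply the Nevanlinna Second Main Theorem (for three small target functions) to $M[f]$, which by Lemma~\ref{lem2} is non-constant, using the targets $0,\infty$ and $a$:
\[
T(r,M[f])\le \overline{N}(r,0;M[f])+\overline{N}(r,\infty;M[f])+\overline{N}(r,a;M[f])+S(r,M[f]).
\]
Because $T(r,M[f])=O(T(r,f))$ by Lemma~\ref{lem3}, the error term is absorbed into $S(r,f)$. Moreover, the poles of $M[f]$ are precisely the poles of $f$ up to the contribution of $\alpha$, so $\overline{N}(r,\infty;M[f])\le \overline{N}(r,\infty;f)+S(r,f)$. Combining with the previous step yields
\[
\mu T(r,f)\le \bigl[\overline{N}(r,0;M[f])-N(r,0;M[f])\bigr]+\overline{N}(r,\infty;f)+\overline{N}(r,a;M[f])+\mu N(r,0;f)+S(r,f).
\]

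The heart of the argument, and what I expect to be the main technical obstacle, is controlling the bracketed term by a careful zero-multiplicity computation. If $z_{0}$ is a zero of $f$ of multiplicity $m$, then $f^{(j)}$ vanishes there to order $\max(m-j,0)$, so $M[f]$ vanishes to order
\[
q_{0}m+\sum_{j=1}^{k}q_{j}\max(m-j,0)=\mu m-\sum_{j=1}^{k}q_{j}\min(m,j).
\]
Summing over all zeros of $f$ and using the very definition of $N_{j}(r,0;f)$, one obtains
\[
N(r,0;M[f])-\overline{N}(r,0;M[f])\ge \mu N(r,0;f)-\sum_{j=1}^{k}q_{j}N_{j}(r,0;f)-\overline{N}(r,0;f)+S(r,f),
\]
where the $S(r,f)$ term absorbs the contribution of zeros/poles of $\alpha$, and where zeros of some $f^{(j)}$ which are not zeros of $f$ only help (they make $N(r,0;M[f])-\overline{N}(r,0;M[f])$ even larger). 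Substituting this inequality into the previous estimate and cancelling $\mu N(r,0;f)$ gives exactly
\[
\mu T(r,f)\le \overline{N}(r,0;f)+\overline{N}(r,a;M[f])+\overline{N}(r,\infty;f)+\sum_{j=1}^{k}q_{j}N_{j}(r,0;f)+S(r,f),
\]
which is the required bound.
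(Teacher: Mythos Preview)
Your proposal is correct and follows essentially the same route as the paper: the logarithmic-derivative estimate $\mu\, m(r,1/f)\le m(r,1/M[f])+S(r,f)$, the Second Main Theorem applied to $M[f]$ with targets $0,\infty,a$ (using Lemma~\ref{lem3} to absorb $S(r,M[f])$ into $S(r,f)$ and $\overline{N}(r,\infty;M[f])\le\overline{N}(r,\infty;f)+S(r,f)$), and then the local multiplicity count at the zeros of $f$. Your closed formula $\mu m-\sum_{j=1}^{k}q_{j}\min(m,j)$ for the order of vanishing of $M[f]$ at an $m$-fold zero of $f$ simply packages the paper's case split $m\le k$ versus $m>k$ more compactly; otherwise the two arguments are identical.
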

\begin{proof}
Using the lemma of logarithmic derivative, we  have
\begin{eqnarray}
\nonumber T(r,f^{\mu}) &=& N(r,0;f^{\mu}) + m\left(r,\frac{1}{f^{\mu}}\right)+O(1) \\
\nonumber &\leq & N(r,0;f^{\mu}) + m\left(r,\frac{1}{M[f]}\right) +S(r,f)\\
&\leq & N(r,0;f^{\mu}) + T(r,M[f]) - N(r,0;M[f]) +S(r,f).
\end{eqnarray}
Now, using the Nevanlinna's second fundamental theorem and the Lemma (\ref{lem3}), we have
\begin{eqnarray}\label{eq8}
T(r,f^{\mu}) &\leq &  N(r,0;f^{\mu}) + \overline{N}(r,0;M[f]) + \overline{N}(r,\infty;M[f])\\
\nonumber &~&+ \overline{N}(r,a;M[f])- N(r,0;M[f]) +S(r,M[f]) +S(r,f)\\
\nonumber &\leq &  N(r,0;f^{\mu}) + \overline{N}(r,0;M[f]) + \overline{N}(r,\infty;f)\\
\nonumber &~&+ \overline{N}(r,a;M[f])- N(r,0;M[f]) +S(r,f).
\end{eqnarray}
Let $z_0$ be a zero of $f(z)$ with multiplicity $q~(\geq 1)$. Then $z_0$ is a zero of $f^{q_{0}}(f')^{q_{1}}\cdots(f^{(k)})^{q_{k}}$ of order at least
\begin{eqnarray*}
&~&qq_{0} + (q-1)q_{1} + (q-2)q_{2}+ \cdots +2q_{q-2}+ q_{q-1}\\
&= & q(q_{0} + q_{1} + \cdots + q_{q-1}) -(1\cdot q_{1} + 2\cdot q_{2}+ \cdots +(q-1) \cdot q_{q-1})~~~~\text{if}~~~~q \leq k,
\end{eqnarray*}
and
\begin{eqnarray*}
&~&qq_{0} + (q-1)q_{1} + (q-2)q_{2}+ \cdots + (q-k)q_{k}\\
&= & q(q_{0} + q_{1} + \cdots + q_{k}) -(1\cdot q_{1} + 2\cdot q_{2}+ \cdots +k \cdot q_{k})~~~~\text{if}~~~~q > k.
\end{eqnarray*}
Therefore $z_0$ is a zero of $M[f]$ of order at least $q(q_{0} + q_{1} + \cdots + q_{q-1}) -(1\cdot q_{1} + 2\cdot q_{2}+ \cdots +(q-1) \cdot q_{q-1})+r$ if $q \leq k$ and $q(q_{0} + q_{1} + \cdots + q_{k})-(1\cdot q_{1} + 2\cdot q_{2}+ \cdots +k \cdot q_{k})+r$ if $q > k$ respectively, (where $r=0$ if $\alpha (z)$ does not have a zero or pole at $z_0$; $r=s$ if $\alpha (z)$ has a zero of order $s$ at $z_0$ and $r=-s$ if $\alpha (z)$ has a pole of order $s$ at $z_0$).\\
Now,
\begin{eqnarray*}
&&q\mu+1-\{q(q_{0} + q_{1} + \cdots + q_{q-1}) -(1\cdot q_{1} + 2\cdot q_{2}+ \cdots +(q-1) \cdot q_{q-1})\}-r\\
&=& 1+(1\cdot q_{1} + 2\cdot q_{2}+ \cdots +(q-1) \cdot q_{q-1})+q(q_{q}+q_{q+1}+\ldots+q_{k})-r~~~~\text{if}~~~~q \leq k.
\end{eqnarray*}
and
\begin{eqnarray*}
&~& q \mu+1 -\{q(q_{0} + q_{1} + \cdots + q_{k}) -(1\cdot q_{1} + 2\cdot q_{2}+ \cdots +k \cdot q_{k})\}-r\\
&= & 1+1\cdot q_{1} + 2\cdot q_{2}+ \cdots +k \cdot q_{k} -r ~~~~\text{if}~~~~q > k.
\end{eqnarray*}
Therefore
\begin{eqnarray*}
&& N(r,0;f^{\mu})+\overline{N}(r,0;M[f])-N(r,0;M[f])\\
 &\leq & \overline{N}(r,0;f)+ q_{1}N_{1}(r,0;f)+ q_{2}N_{2}(r,0;f)+\cdots+q_{k}N_{k}(r,0;f) +S(r,f).
\end{eqnarray*}
Therefore (\ref{eq8}) gives
\begin{eqnarray*}
\mu T(r,f) &\leq & \overline{N}(r,\infty;f)+\overline{N}(r,a;M[f])+\overline{N}(r,0;f) +q_{1}N_{1}(r,0;f) \\
&~& + q_{2}N_{2}(r,0;f)+\cdots +q_{k}N_{k}(r,0;f) +S(r,f).
\end{eqnarray*}
This completes the proof.
\end{proof}
\begin{lem}\label{apl1}(\cite{Sc, z})
Let $\mathscr{F}$ be a family of meromorphic functions on the unit disc $\Delta$ such that all zeros of functions in $\mathscr{F}$ have multiplicity at least $k$. Let $\alpha$ be a real number satisfying $0\leq \alpha < k$ . Then $\mathscr{F}$ is not normal in any neighbourhood of $z_0 \in \Delta$ if and only if there exists
\begin{itemize}
\item[i)] points $z_n \in \Delta$, $z_n \to z_0$;
\item[ii)] positive numbers $\rho _n, ~\rho_n \to 0;$ and
\item[iii)] functions $f_n \in \mathscr{F}$
\end{itemize}
such that $\displaystyle{ \rho_n ^{- \alpha} f_n(z_n+ \rho_n \zeta) \to g(\zeta)}$ spherically uniformly on compact subsets of $\mathbb{C}$, where $g$ is  non-constant meromorphic function.
\end{lem}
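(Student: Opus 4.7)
The approach is to follow the classical Pang--Zalcman rescaling argument, treating the two directions of the equivalence separately.

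\textbf{Sufficiency.} Suppose such $z_n$, $\rho_n$, $f_n$ and a non-constant meromorphic limit $g$ exist. I argue by contradiction. If $\mathscr{F}$ were normal on some neighbourhood $U$ of $z_0$, then after passing to a subsequence $f_n\to F$ spherically uniformly on $U$, where either $F$ is meromorphic on $U$ or $F\equiv\infty$. On any compact set in the $\zeta$-plane, $\rho_n^{-\alpha}f_n(z_n+\rho_n\zeta)$ would then be forced either to tend to $\infty$ uniformly (when $F(z_0)=\infty$) or---using Hurwitz, together with the assumption that zeros of the $f_n$ have multiplicity at least $k$ and $\alpha<k$---to tend to a constant. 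Either outcome contradicts $g$ being a non-constant meromorphic function on $\mathbb{C}$.

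\textbf{Necessity.} By Marty's theorem, failure of normality at $z_0$ yields $f_n\in\mathscr{F}$ and $w_n\to z_0$ with spherical derivatives $f_n^{\#}(w_n)\to\infty$. Fix a closed disk $\overline{\Delta}(z_0,r)\subset\Delta$ and, following the Pang trick, introduce the weighted quantity
$$H_n(z)\;=\;(r-|z-z_0|)^{1+\alpha}\,\frac{|f_n'(z)|}{1+|f_n(z)|^{2}}.$$
Let $z_n$ be a point at which $H_n$ attains its maximum on $\overline{\Delta}(z_0,r)$, and choose the scaling factor $\rho_n$ precisely so that the rescaled function $g_n(\zeta):=\rho_n^{-\alpha}f_n(z_n+\rho_n\zeta)$ has its spherical derivative at $\zeta=0$ normalised to $1$. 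The unboundedness of $f_n^{\#}$ forces $H_n(z_n)\to\infty$, hence $\rho_n\to 0$, and the interior-maximum property forces $z_n\to z_0$. A direct computation comparing $H_n(z_n)$ with $H_n(z_n+\rho_n\zeta)$ then shows that $g_n^{\#}(\zeta)$ is uniformly bounded on each compact subset of $\mathbb{C}$. A second application of Marty's theorem provides a subsequence converging spherically uniformly on compacta to a meromorphic $g$ on $\mathbb{C}$, and the normalisation gives $g^{\#}(0)=1$, so $g$ is non-constant.

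\textbf{Main obstacle.} The delicate point is calibrating the weight exponent and scaling $\rho_n$ so that the inequality $0\le\alpha<k$ interacts correctly with the zero-multiplicity hypothesis. Concretely, near a zero of $f_n$ of order $m\ge k$, the factor $\rho_n^{-\alpha}$ must not destabilise the rescaled expression; the condition $\alpha<k$ ensures $\rho_n^{\,m-\alpha}\to 0$, so zeros of $f_n$ survive as zeros of $g_n$ of multiplicity at least $k$, and the Marty-extracted limit is a genuine meromorphic function on $\mathbb{C}$ rather than a degenerate one. Getting this interaction right, together with verifying the uniform spherical-derivative bound on the $g_n$ from the maximality of $z_n$, is the part of the argument that requires the most care; the rest is bookkeeping.
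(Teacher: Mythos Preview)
The paper does not supply a proof of this lemma; it is simply quoted from \cite{Sc, z} as a standard tool, with no argument given. Your outline is essentially the classical Pang--Zalcman rescaling proof found in those references, so there is no in-paper proof to compare against.
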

\section{Proof of the Theorems}
\begin{proof}[\textbf{Proof of Theorem \ref{cor1.1}}]
Assume $$M[f]= \alpha f^{q_{0}}(f')^{q_{1}} \cdots (f^{(k)})^{q_{k}}.$$
Since $a(\not \equiv 0, \infty)$ is a small function of $f$, thus from Lemma (\ref{lem4}), we get
\begin{eqnarray}\label{eq10}
\mu T(r,f) &\leq & \overline{N}(r,\infty;f)+\overline{N}(r,a;M[f])+\overline{N}(r,0;f)+q_{1}N_{1}(r,0;f) \\
\nonumber &~& + q_{2}N_{2}(r,0;f)+\cdots +q_{k}N_{k}(r,0;f) +S(r,f).
\end{eqnarray}
Now (\ref{eq10}) can be written as
\begin{eqnarray}\label{eqqn1}
(q_{0}-1)T(r,f) \leq\overline{N}(r,\infty;f)+\overline{N}(r,a;M[f])+S(r,f).
\end{eqnarray}
Given $N_{1)}(r,\infty;f)=S(r,f)$, so (\ref{eqqn1}) can be written as
\begin{eqnarray*}
\left(q_{0} -\frac{3}{2}\right) T(r,f) & \leq & \overline{N}(r,\infty;f)-\frac{1}{2}N_{(2}(r,\infty;f)+\overline{N}(r,a;M[f])+S(r,f)\\
& \leq &\overline{N}(r,a;M[f])+S(r,f).
\end{eqnarray*}
Thus
\begin{eqnarray*}
 T(r,f)  & \leq & \frac{2}{(2q_{0} -3)}\overline{N}\left(r,\frac{1}{M[f]-a}\right)+S(r,f).
\end{eqnarray*}
This completes the proof.
\end{proof}
\begin{proof}[\textbf{Proof of Theorem \ref{th1.11}}]
Given that $\mathscr{F}$ is the family of analytic functions in a domain $D$ such that for each $f\in \mathscr{F}$
\begin{itemize}
\item[(a)] $f$ has only zeros of multiplicity at least $k$ and
\item[(b)] $\displaystyle{f^{q_0}(f')^{q_1} \cdots (f^{(k)})^{q_k}} \neq 1$,
\end{itemize}
where $k~(\geq 1), q_0~(\geq 2), q_i~(\geq 0)~(i=1,2, \cdots, k-1), q_k(\geq 1)$ are the positive integers.\par
Our claim is that the family of analytic functions $\mathscr{F}$ is normal on domain $D$.  Since normality is a local property, so we may assume that $D=\Delta$, the unit disc. Thus we have to show that $\mathscr{F}$ is normal in $\Delta$.\par
On contrary, we assume that $\mathscr{F}$ is not normal in $\Delta$. Now we define a real number as
 $$\alpha = \frac{\mu_*}{\mu},$$
where $\mu = q_0 +q_1+ \cdots +q_k$ and $\mu_* = q_1+2q_2+\cdots +kq_k$. Since $q_0(\geq 2), q_i(\geq 0)~(i=1,2, \cdots, k-1) ~~~\text{and}~~~ q_k(\geq 1)$, so, $ 0 \leq \alpha < k$.\par
Since $\mathscr{F}$ is not normal in $\Delta$, so by Lemma \ref{apl1}, there exists $\{f_n\} \subset \mathscr{F}, z_n \in \Delta$ and positive numbers $\rho_n$ with $\rho_n \to 0$ such that
$$ u_n(\zeta) =  \rho_n ^{- \alpha} f_n(z_n +\rho_n \zeta) \to u(\zeta), $$
spherically uniformly on every compact subsets of $\mathbb{C}$, where $u(\zeta)$ is a non-constant meromorphic function. Now define
$$V_n(\zeta) = (u_n(\zeta))^{q_0}(u_n^{'}(\zeta))^{q_1} \cdots (u_n^{(k)}(\zeta))^{q_k},$$  and $$V(\zeta) = (u(\zeta))^{q_0}(u^{'}(\zeta))^{q_1} \cdots (u^{(k)}(\zeta))^{q_k}.$$
Therefore
\begin{eqnarray}
\nonumber &&V_n(\zeta)\\
\nonumber &=& (u_n(\zeta))^{q_0}(u_n^{'}(\zeta))^{q_1} \cdots (u_n^{(k)}(\zeta))^{q_k} \\
\nonumber &=& \rho_n ^ {\mu_* -\alpha \mu}(f_n(z_n+\rho_n \zeta))^{q_0}(f_n^{'}(z_n+\rho_n \zeta))^{q_1}  \cdots (f_n^{(k)}(z_n+\rho_n \zeta))^{q_k} \\
&=& (f_n(z_n+\rho_n \zeta))^{q_0}(f_n^{'}(z_n+\rho_n \zeta))^{q_1}  \cdots (f_n^{(k)}(z_n+\rho_n \zeta))^{q_k}.
\end{eqnarray}
Since $u_n(\zeta) \to u(\zeta)$ locally, uniformly and spherically, so, $V_n(\zeta) \to V(\zeta)$ locally, uniformly and spherically.
\par
Since $\{f_n\}$ is a sequence of analytic functions and $\rho_n$ are positive numbers, thus $\{u_n(\zeta)\}$ is a sequence of analytic functions which converges locally, uniformly and spherically to $u(\zeta)$. Since $u(\zeta)$ is non-constant, so, $u(\zeta)$ must be non-constant analytic function.\par Given that any zero of $f_n$ has multiplicities at least $k$, so by the Hurwitz's theorem, any zero of $u(\zeta)$ has also  multiplicities at least $k$. Thus obviously  $V(\zeta)\not\equiv 0$.\par
Again, since $V_n(\zeta) \neq 1$ and  $V_n(\zeta) \to V(\zeta)$ uniformly, locally, spherically, so by the Hurwitz's theorem $V(\zeta) \neq 1$.\par
Hence  $u(\zeta)$ must be non-transcendental, otherwise, Theorem \ref{cor1.1} implies $V(\zeta) = 1$ has infinitely many solution, that is impossible.\par
Thus $u(\zeta)$ must be a non-constant polynomial function, say $u(\zeta)= c_0 + c_1\cdot\zeta + \cdots + c_r\cdot\zeta^{r}.$\par
Since any zero of $u(\zeta)$ has multiplicity at least $k$, thus the value of $r$ must be at least $k$.\par
Thus $u(\zeta)$ is a polynomial of degree at least $k$, but it is not possible as $V(\zeta)\not=1$. Thus our assumption is wrong. Hence we obtain our result.
\end{proof}
\begin{center}
{\bf Acknowledgement}
\end{center}
The authors are grateful to the anonymous referee for his/her valuable suggestions which considerably improved the presentation of the paper.\par
The first author is thankful to the Council of Scientific and Industrial Research, HRDG, India for granting Junior Research
Fellowship (File No.: 08/525(0003)/2019-EMR-I) during the tenure of which this work was done.\par
The research work of the second author is supported by the Department of Higher Education, Science and Technology \text{\&} Biotechnology, Govt.of West Bengal under the sanction order no. 216(sanc) /ST/P/S\text{\&}T/16G-14/2018 dated 19/02/2019.

\end{document}